\newcommand{\scal}[2]{\langle #1,#2\rangle}
\newcommand{\rr}[1]{\mathbf R^{#1}}
\newcommand{\nn}[1]{\mathbf N^{#1}}
\newcommand{\zz}[1]{\mathbf Z^{#1}}
\newcommand{\nm}[2]{\Vert #1\Vert _{#2}}
\newcommand{\Nm}[2]{\left \Vert #1\right \Vert _{#2}}
\newcommand{\nmm}[1]{\Vert #1\Vert }
\newcommand{\ep}{\varepsilon}
\newcommand{\fy}{\varphi}
\newcommand{\cdo}{\, \cdot \, }
\newcommand{\supp}{\operatorname{supp}}
\newcommand{\vrum}{\vspace{0.1cm}}
\newcommand{\maclB}{\mathcal B}
\newcommand{\maclH}{\mathcal H}
\newcommand{\mascB}{\mathscr B}
\newcommand{\mascD}{\mathscr D}
\newcommand{\mascS}{\mathscr S}
\numberwithin{equation}{section}          %Detta gör att man får
\newtheorem{thm}{Theorem}
\numberwithin{thm}{section}
\newtheorem{lemma}[thm]{Lemma}
\theoremstyle{definition}
\newtheorem{defn}[thm]{Definition}
\theoremstyle{remark}
\newtheorem{rem}[thm]{Remark}              %T o m hit är bara allmän
\title{Translation and modulation invariant Hilbert spaces}
\author{Joachim Toft}
\address{Department of Mathematics,
Linn{\ae}us University, V{\"a}xj{\"o}, Sweden}
\email{joachim.toft@lnu.se}
\author{Anupam Gumber}
\address{Department of Mathematics, Indian Institute
of Science, 560 012 Bangalore, India}
\email{anupamgumber@iisc.ac.in}
\author{Ramesh Manna}
\address{Department of Mathematics, Indian Institute of Science,
560 012 Bangalore, India}
\email{rameshmanna@iisc.ac.in}
\author{P. K. Ratnakumar}
\address{Harish-Chandra Research Institute (HBNI), Chhatnag Road,
Jhunsi, Allahabad, 211019, Uttarpradesh, India}
\email{ratnapk@hri.res.in}
\begin{document}

\begin{abstract}
We show that for any Hilbert space, $\maclH$, of distributions on $\rr d$
which is translation and modulation invariant, is equal to $L^2(\rr d)$,
with the same norm apart from a multiplicative constant.
\end{abstract}

\keywords{modulation spaces, Feichtinger's minimization principle}

\subjclass{46C15,46C05,42B35}

\maketitle

%%%%%%%%%%%%%%%%%%%%%%%%%%%%%
\section{Introduction}\label{sec0}
%%%%%%%%%%%%%%%%%%%%%%%%%%%%%

\par

In the paper we show that any Hilbert space of distributions on $\rr d$ which is translation
and modulation invariant agrees with $L^2(\rr d)$. 
These considerations are strongly linked with Feichtinger's minimization property,
which shows that the modulation space (also called the
Feichtinger algebra) $M^{1,1}(\rr d)$ is the smallest non-trivial Banach space which is
norm invariant under translations and modulations. Our investigations may therefore
be considered as a Hilbert space analogy of those investigations which lead to
Feichtinger's minimization property.

\par

We remark that the search of the
smallest Banach space possessing such norm invariance properties, seems to be the
main reason that Feichtinger was led to introduce and investigate
$M^{1,1}(\rr d)$ and in its prolongation the foundation of classical modulation spaces
(see \cite{Fe1}). The space $M^{1,1}(\rr d)$ is small in the sense that it is contained in
any Lebesgue space $L^p(\rr d)$, as well as the Fourier image of these spaces (cf.
\cite{Fe1,Gc1,Toft20} and the references therein).
This fact is also an immediate consequence of Feichtinger's minimization property. On
the contrary, the modulation space $M^{\infty ,\infty}(\rr d)$, which is the dual of
$M^{1,1}(\rr d)$,
contains all these Lebesgue and Fourier Lebesgue spaces. By a straight-forward
duality approach it can be proved that for suitable assumptions on a translation
and modulation invariant Banach space $\mascB$, we have
$$
M^{1,1}(\rr d) \subseteq \mascB \subseteq M^{\infty ,\infty}(\rr d)
$$
(where the first inclusion is a reformulation of the Feichtinger's minimization property).

\par

Feichtinger's minimazation property has been extended in different ways, e.{\,}g. to
weighted spaces (see e.{\,}g. \cite[Chapter 12]{Gc1}), and to the quasi-Banach
situation (see e.{\,}g. \cite{Toft20}). At the same time minimization property has been
applied in e.{\,}g. non-uniform samplings, and for deducing sharp Schatten-von
Neumann and nuclear results for operators with kernels in modulation spaces
(see e.{\,}g. \cite{Toft20}).

\par

In our investigations we do not present any such weighted analogies in the
Hilbert space case.

\par

\section*{Acknowledgement}
The first author was supported by Vetenskapsr{\aa}det
(Swedish Science Council) within the project 2019-04890. He
is also grateful to Harish-Chandra
Research Institute, Prayagraj (Allahabad) for the excellent
hospitality and research facilities during his stay in the end of 2019.

\par

The second author is grateful for the support received from NBHM grant
(0204/19/2019R{\&}D-II/10472). She is also very grateful to
Harish-Chandra Research Institute, Prayagraj (Allahabad)
(HRI) for providing excellent research facilities and kind
hospitality during her academic visit to HRI.

\par

The third author is  thankful to Indian Institute of Science
(C.V. Raman PDF, file no: R(IA)CVR-PDF/2020/224 ) for
the financial support. He also thanks Harish-Chandra
Research Institute, Prayagraj (Allahabad) (HRI) for the visit and
excellent research facilities.

\par

%%%%%%%%%%%%%%%%%%%%%%%%%%%%%%
%\section{Preliminaries}\label{sec1}
%%%%%%%%%%%%%%%%%%%%%%%%%%%%%%
%

%%%%%%%%%%%%%%%%%%%%%%%%%%
\section{Translation and modulation invariant
Hilbert spaces}\label{sec2}
%%%%%%%%%%%%%%%%%%%%%%%%%%

\par

In this section we first recall the definition of translation
and modulation invariant spaces. Thereafter we consider
such spaces which at the same time are Hilbert spaces
of distributions on $\rr d$. We show some
features on how differentiations and multiplications by polynomials
of such spaces behave in the inner product of such Hilbert
spaces. In the end we show that such Hilbert spaces agree
with $L^2(\rr d)$.

\par

We use the same notations as in \cite{Ho1}. The definition of
translation and modulation invariant quasi-Banach spaces is
given in the following.

\par

\begin{defn}\label{Def:InvariantSpaces}
Let $\maclB$ be a quasi-Banach space of measurable functions or (ultra-)distributions
on $\rr d$. Then $\maclB$ is called \emph{translation and modulation invariant},
if
$x\mapsto f(x-x_0)e^{i\scal x{\xi _0}}$ belongs to $\maclB$ and
$$
\nm {f(\cdo -x_0)e^{i\scal \cdo {\xi _0}}}{\maclB} = \nm f{\maclB}
$$
for every $f\in \maclB$ and $x_0,\xi _0 \in \rr d$.
\end{defn}

\par
\par

Our main result is as follows. 

\par

\begin{thm}\label{Thm:InvHilbertSpaces}
Let $\maclH$ be a translation and modulation invariant Hilbert space
on $\rr d$ which contains at least one element in $M^{1,1}(\rr d)\setminus 0$
and is continuously embedded in $\mascD '(\rr d)$. Then
$\maclH = L^2(\rr d)$ with
\begin{equation}\label{Eq:NormEquiv}
\nm f{\maclH} =  c\cdot  % \left ( {\nm {g}{\maclH}}/{\nm {g}{L^2}}\right )
%\left (\frac {\nm {g}{\maclH}}{\nm {g}{L^2}}\right )
\nm f{L^2(\rr d)},
\end{equation}
for some constant $c>0$ which is independent of $f\in \maclH = L^2(\rr d)$.
\end{thm}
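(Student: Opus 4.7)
My plan is to interpret the invariance hypothesis as giving a unitary projective representation of $\rr{2d}$ on $\maclH$ and to apply the Stone--von Neumann uniqueness theorem, then use the $\mascD'$-embedding to pin down the multiplicity. The operators $\pi(x,\xi):=M_\xi T_x$ act isometrically on $\maclH$ by hypothesis, and their composition law is inherited from the explicit formulas on $\mascD'(\rr d)$. Consequently $\{\pi(z)\}_{z\in\rr{2d}}$ is a unitary projective representation of $\rr{2d}$ on $\maclH$ with the standard Schr\"odinger cocycle, equivalently a unitary representation of the Heisenberg group $\HH^d$ on $\maclH$ with the standard central character.

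I first assemble the preliminaries. Feichtinger's minimization principle, applied to $\maclH$ regarded as a translation and modulation invariant Banach space containing a nonzero element of $M^{1,1}(\rr d)$, yields the continuous embedding $M^{1,1}(\rr d)\hookrightarrow \maclH$; in particular $\mascS(\rr d)\hookrightarrow \maclH$ continuously, and by Hilbert self-duality also $\maclH\hookrightarrow \mascS'(\rr d)$. The continuity of $z\mapsto \pi(z)\varphi$ in $\mascS$, combined with the unitarity of $\pi$ on $\maclH$, yields strong continuity of $\pi$ on $\maclH$ by a standard approximation argument, so Stone--von Neumann is applicable. It produces a Hilbert space $\mathcal K$ and a unitary isomorphism
\[
U\colon L^2(\rr d)\otimes \mathcal K\longrightarrow \maclH
\]
intertwining the Schr\"odinger representation $\pi_0\otimes I_{\mathcal K}$ on $L^2(\rr d)\otimes \mathcal K$ with $\pi$ on $\maclH$.

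The critical step is to deduce $\dim \mathcal K=1$ from the injectivity of $\iota\colon \maclH\hookrightarrow \mascD'(\rr d)$. For each $v\in \mathcal K$ the map
\[
T_v\colon L^2(\rr d)\longrightarrow \mascD'(\rr d),\qquad f\longmapsto \iota\bigl(U(f\otimes v)\bigr),
\]
is a continuous linear intertwiner between $\pi_0$ and the canonical time-frequency action on $\mascD'$. A Schur-type argument---dualizing $T_v$ against Schwartz test functions through $\mascS\hookrightarrow L^2\hookrightarrow \mascS'$, using Riesz to produce a bounded $\pi_0$-intertwiner $L^2\to L^2$, and applying the classical Schur lemma for the irreducible Schr\"odinger representation---shows that $T_v$ equals a scalar multiple $c_v$ of the canonical inclusion $L^2\hookrightarrow \mascD'$. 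The assignment $v\mapsto c_v$ is linear, and the injectivity of $\iota\circ U$ forces this linear functional on $\mathcal K$ to be injective, so $\dim \mathcal K\le 1$. The $M^{1,1}$-hypothesis ensures $\maclH\ne 0$, whence $\dim \mathcal K=1$. Identifying $\mathcal K$ with $\co$, the composition $\iota\circ U$ becomes a nonzero scalar multiple of the canonical inclusion $L^2\hookrightarrow \mascD'$, yielding $\maclH=L^2(\rr d)$ as subsets of $\mascD'(\rr d)$ together with the norm equivalence \eqref{Eq:NormEquiv}.

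The main obstacle will be the Schur-type identification in the last step: the target $\mascD'(\rr d)$ is not a Hilbert space, so the classical Schur's lemma does not apply directly to intertwiners $L^2\to \mascD'$. The workaround is to use the duality sandwich $\mascS\hookrightarrow L^2\hookrightarrow \mascS'$ and Moyal's orthogonality relations in $L^2$ to dualize such an intertwiner to a bounded $\pi_0$-intertwiner on $L^2$, where classical Schur then applies.
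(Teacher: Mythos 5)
Your overall strategy (Stone--von Neumann plus a multiplicity-one argument) is a genuinely different route from the paper's, but as written it has a real gap at the point where the actual difficulty of the theorem sits. To apply Stone--von Neumann you need strong continuity of $z\mapsto \pi (z)$ on \emph{all} of $\maclH$, and your ``standard approximation argument'' presupposes that $\mascS (\rr d)$ is dense in $\maclH$. Nothing in the hypotheses gives that density a priori; establishing it is precisely the hard part of the theorem. Feichtinger's minimization only gives the continuous inclusions $\mascS (\rr d)\subseteq M^{1,1}(\rr d)\subseteq \maclH$, so continuity of $z\mapsto \pi (z)f$ in $\maclH$ is available only for $f$ in the closure $\maclH _0$ of $\mascS (\rr d)$ in $\maclH$. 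Restricting to $\maclH _0$ (which is $\pi$-invariant), Stone--von Neumann plus your multiplicity argument would give $\maclH _0=L^2(\rr d)$ with proportional norm, but you are then left with no tool to show $\maclH _0^{\perp }=\{0\}$: orthogonality in $\maclH$ says nothing about the distributional pairing unless one already knows that $(\cdo ,\cdo )_{\maclH}$ is proportional to $(\cdo ,\cdo )_{L^2}$ on test functions and extends this by duality --- which is exactly what the paper's closing Hahn--Banach/Hermite-expansion argument does, after first proving the norm identity on $\mascS (\rr d)$ via ladder operators. The same circularity affects your claim that $\maclH \hookrightarrow \mascS '(\rr d)$ ``by Hilbert self-duality'': the map $g\mapsto (\cdo ,g)_{\maclH}|_{\mascS}$ is injective only if $\mascS (\rr d)$ is dense in $\maclH$, and it need not coincide with the given realization of $\maclH$ inside $\mascD '(\rr d)$; the paper instead deduces $\maclH \hookrightarrow \mascS '(\rr d)$ directly from the $\mascD '$-embedding and translation invariance by a partition-of-unity estimate.

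A secondary weak point is the Schur step. The statement you need --- every continuous intertwiner $T\colon L^2(\rr d)\to \mascD '(\rr d)$ commuting with all $M_\xi T_x$ is a scalar multiple of the inclusion --- is true, but your mechanism does not quite deliver it: for fixed $\varphi$, Riesz gives $S\varphi \in L^2$ with $\langle Tf,\varphi \rangle =(f,S\varphi )_{L^2}$, yet there is no a priori bound of $\nm {S\varphi}{L^2}$ by $\nm \varphi {L^2}$, so you do not yet have a \emph{bounded} $\pi _0$-intertwiner on $L^2$ to which classical Schur applies. A cleaner repair: restrict $T$ to $\mascS (\rr d)$, invoke the Schwartz kernel theorem, note that commuting with all translations forces the kernel to have the form $k(x-y)$, and commuting with all modulations forces $(e^{i\scal t\xi }-1)k(t)=0$ for every $\xi$, hence $k=c\delta$. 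For comparison, the paper avoids representation theory entirely: polarization makes translations and modulations unitary on $\maclH$, differentiating the resulting identities shows $\partial _j$ and $x_j$ are formally symmetric for $(\cdo ,\cdo )_{\maclH}$ on $\mascS (\rr d)$, and the creation/annihilation operators then show the Hermite functions are orthogonal in $\maclH$ with constant norm; this yields the norm identity on $\mascS (\rr d)$ directly, which is exactly the input that powers the density step your proposal is missing.
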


\par

\begin{rem}
It is obvious that the constant $c$ in \eqref{Eq:NormEquiv}
can be evaluated by
$$
c=(\nm f{L^2(\rr d)} )^{-1}\nm f{\maclH}
$$
for any fixed $f\in \maclH \setminus 0$.
\end{rem}

\par

We need some preparations for the proof. Since
$\maclH$ in Theorem \ref{Thm:InvHilbertSpaces} is continuously embedded
in $\mascD '(\rr d)$, it follows by some straight-forward arguments that
$\maclH$ is continuously embedded in $\mascS '(\rr d)$. In order to be self-contained
we here present some motivations.

\par

In fact, let $Q_{d,r}$ be the cube $[0,r]^d\subseteq \rr d$, $\ep >0$ and
$0\le \fy \in C_0^\infty (\rr d)$ be such that
$$
\supp \fy \subseteq Q_{d,1+\ep}
\quad \text{and}\quad
\sum _{j\in \zz d}\fy (\cdo -j) = 1.
$$
Since $\maclH$ is continuously embedded in $\mascD '(\rr d)$ we get
$$
|(f,\psi _0)|
\lesssim
C\nm f{\maclH}\sum _{|\alpha |\le N}\nm {\partial ^\alpha \psi _0}{L^\infty (Q_{d,1+\ep})},
$$
for some constants $C>0$ and $N\ge 0$ which are
independent of $\psi _0\in C_0^\infty (Q_{d,1+\ep})$ and
$f\in \maclH$.

\par

Hence, if $\psi \in C_0^\infty (\rr d)$ we get
\begin{align*}
|(f,\psi )| &= \left |\left (  f, \sum _{j\in \zz d} \psi \cdot \fy (\cdo -j) \right ) \right |
\\[1ex]
&\le
\sum _{j\in \zz d} |(f(\cdo +j),\psi (\cdo +j)\fy )|
\\[1ex]
&\le
C\sum _{j\in \zz d} \nm {f(\cdo +j)}{\maclH}\sum _{|\alpha |\le N}
\nm {\partial ^\alpha (\psi (\cdo +j)\fy)}{L^\infty (Q_{d,1+\ep})}
\\[1ex]
&=
C\nm {f}{\maclH} \sum _{j\in \zz d} \sum _{|\alpha |\le N}
\nm {\partial ^\alpha (\psi (\cdo +j)\fy)}{L^\infty (Q_{d,1+\ep})}
\\[1ex]
&\le
C\cdot C_\fy \nm {f}{\maclH} \nmm{\psi} ,
\end{align*}
for some semi-norm $\nmm \cdo$ in $\mascS (\rr d)$. Here $C_\fy$
only depends on $\fy$. Hence,
\begin{equation}\label{Eq:SemiNormQuasiEst}
|(f,\psi )| \le C\nm {f}{\maclH} \nmm{\psi}
\end{equation}
for some constant $C$ which is independent of $f\in \maclH$ and
$\psi \in C_0^\infty (\rr d)$. Since $C_0^\infty (\rr d)$ is a dense subspace of
$\mascS (\rr d)$, it follows that the definition of $(f,\psi )$ extends uniquely
to any $f\in \maclH$ and $\psi \in \mascS (\rr d)$, and that
\eqref{Eq:SemiNormQuasiEst} holds. This shows that $\maclH$ is continuously
embedded in $\mascS '(\rr d)$.

\medspace

By Feichtinger's minimization property it follows that for $\maclH$ in Theorem
\ref{Thm:InvHilbertSpaces} we have
\begin{equation}\label{Eq:FirstInclusion}
\mascS (\rr d)\subseteq M^{1,1}(\rr d)\subseteq \maclH \subseteq M^{\infty ,\infty}(\rr d),
\end{equation}
with continuous inclusions. In particular, the
normalized standard Gaussian on $\rr d$, $h_0(x)= \pi ^{-\frac d4}e^{-\frac 12|x|^2}$ belongs
to $\maclH$.

\par

We have now the following lemma.
%We need some preparations for the proof and start with the following lemma.

\par

\begin{lemma}\label{Lemma:GenPropInvHilbertSpaces}
Let $\maclH$ be a translation and modulation invariant Hilbert space
on $\rr d$. Then the following is true:
\begin{enumerate}
\item for every $f,g \in \maclH$ and $x,\xi \in \rr d$
it holds
\begin{align}
(f (\cdo -x),g )_{\maclH}
&=
(f ,g (\cdo +x))_{\maclH}
\label{Eq:GenPropInvHilbertSpaces1A}
\intertext{and}
(f \cdot e^{-i\scal \cdo \xi},g )_{\maclH}
&=
(f ,g \cdot e^{i\scal \cdo \xi})_{\maclH} \text ;
\label{Eq:GenPropInvHilbertSpaces1B}
\end{align}
%%
%
%\vrum
%
%\item  for every $\phi ,\psi \in \mascS (\rr d)$
%it holds
%%%
%\begin{equation}\label{Eq:GenPropInvHilbertSpaces2}
%(\widehat \phi ,\psi )_{\maclH} = (\phi ,\widehat {\check \psi })_{\maclH}
%\text ;
%\end{equation}
%%%

\vrum

\item $(\partial ^\alpha f ,g )_{\maclH} = (f ,(-\partial )^\alpha g )_{\maclH}$
and $(x^\alpha f ,g )_{\maclH} = (f ,x^\alpha g )_{\maclH}$
for every $f ,g \in \mascS (\rr d)$ and $\alpha \in \nn d$.
\end{enumerate}
\end{lemma}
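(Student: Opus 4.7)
Part (1) is essentially Hilbert-space unitarity. The operator $T_{x_0}:f\mapsto f(\cdot -x_0)$ is a norm-preserving linear endomorphism of $\maclH$ whose inverse $T_{-x_0}$ also preserves the norm, so $T_{x_0}$ is a surjective isometry of a Hilbert space onto itself, hence unitary; its adjoint equals $T_{-x_0}$, which is exactly \eqref{Eq:GenPropInvHilbertSpaces1A}. The same argument applied to $M_{\xi _0}:f\mapsto e^{i\scal \cdo {\xi _0}}f$, whose inverse is $M_{-\xi _0}$, yields \eqref{Eq:GenPropInvHilbertSpaces1B}.

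For part (2) the strategy is to differentiate the identities of (1) at the origin. For $f,g\in \mascS(\rr d)$ the curves $x\mapsto f(\cdo -x)$, $x\mapsto g(\cdo +x)$, $\xi \mapsto e^{-i\scal \cdo \xi}f$ and $\xi \mapsto e^{i\scal \cdo \xi}g$ are smooth as maps from $\rr d$ into $\mascS(\rr d)$, with partial derivatives at the origin equal to $-\partial _jf$, $\partial _jg$, $-ix_jf$ and $ix_jg$, respectively (a routine Schwartz-seminorm estimate). By the continuous embedding $\mascS(\rr d)\hookrightarrow \maclH$ supplied by \eqref{Eq:FirstInclusion} these are also smooth curves in $\maclH$, and joint continuity of the inner product lets one differentiate both sides of \eqref{Eq:GenPropInvHilbertSpaces1A} and \eqref{Eq:GenPropInvHilbertSpaces1B} inside $(\cdo ,\cdo )_{\maclH}$.

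Differentiating \eqref{Eq:GenPropInvHilbertSpaces1A} in $x_j$ at $x=0$ gives $(-\partial _jf,g)_{\maclH}=(f,\partial _jg)_{\maclH}$, equivalently $(\partial _jf,g)_{\maclH}=(f,-\partial _jg)_{\maclH}$. Differentiating \eqref{Eq:GenPropInvHilbertSpaces1B} in $\xi _j$ at $\xi =0$ produces $-i(x_jf,g)_{\maclH}$ on the left and, by conjugate-linearity of the inner product in its second slot, $(f,ix_jg)_{\maclH}=-i(f,x_jg)_{\maclH}$ on the right; cancelling the common factor $-i$ yields $(x_jf,g)_{\maclH}=(f,x_jg)_{\maclH}$. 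The full identities in (2) follow by iteration, which is legitimate because $\partial ^\beta f$ and $x^\beta f$ remain in $\mascS(\rr d)$ for every multi-index $\beta$.

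The one delicate point is the interchange of differentiation with the inner product; this is precisely why the statement of (2) is restricted to Schwartz data. For a general $f\in \maclH$ there is no reason for $x\mapsto T_xf$ to be $\maclH$-differentiable, but for $f\in \mascS(\rr d)$ Schwartz-space smoothness transfers to $\maclH$ via \eqref{Eq:FirstInclusion}. Beyond this, the proof is a straightforward combination of the unitarity from (1) with the chain rule.
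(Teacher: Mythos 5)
Your proposal is correct and follows essentially the same route as the paper: part (1) via norm preservation of the translation/modulation operators (your unitarity argument is the polarization identity the paper uses), and part (2) by differentiating \eqref{Eq:GenPropInvHilbertSpaces1A} and \eqref{Eq:GenPropInvHilbertSpaces1B} at $x=\xi =0$. The only difference is that you spell out the justification for differentiating inside $(\cdo ,\cdo )_{\maclH}$ (smoothness of the curves in $\mascS (\rr d)$ plus the continuous embedding $\mascS (\rr d)\hookrightarrow \maclH$), which the paper leaves implicit.
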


\par

\begin{proof}
We have
$$
\nm {f (\cdo -x)e^{-i\scal \cdo \xi}}{\maclH}^2 = \nm f{\maclH}^2,
$$
which by polarization gives
\begin{equation}\label{Eq:GenPropInvHilbertSpaces3}
(f (\cdo -x)e^{-i\scal \cdo \xi},g (\cdo -x)e^{-i\scal \cdo \xi})_{\maclH}
=
(f ,g )_{\maclH},
\end{equation}
when $f ,g \in \maclH$ and $x,\xi \in \rr d$. This gives (1).

\par

The assertion (2) follows by applying $\partial _x^\alpha$ and $\partial _\xi ^\alpha$ on
\eqref{Eq:GenPropInvHilbertSpaces1A} and \eqref{Eq:GenPropInvHilbertSpaces1B},
and then letting $x=\xi =0$.
\end{proof}

\par

We shall apply the previous result to deduce essential information of Hermite functions
and their role in the Hilbert space $\maclH$. We recall that the
Hermite function $h_\alpha$ of order $\alpha \in \nn d$ on $\rr d$
is defined by
$$
h_\alpha (x) = \pi ^{-\frac d4}(-1)^{|\alpha |}
(2^{|\alpha |}\alpha !)^{-\frac 12}e^{\frac 12{|x|^2}}
(\partial ^\alpha e^{-|x|^2}),\qquad x\in \rr d,\ \alpha \in \nn d.
$$
It is well-known that $\{h_\alpha \} _{\alpha \in \nn d}$ is an orthonormal basis for
$L^2(\rr d)$, and a basis for $\mascS (\rr d)$.

\par

We may pass between different Hermite functions by applying the
\emph{annihilation} and \emph{creation} operators, which are given by
$$
\operatorname{A}_j
= \frac 1{\sqrt 2}
\left (
x_j +\frac \partial {\partial x_j}
\right )
\quad \text{and}\quad
\operatorname{C}_j
= \frac 1{\sqrt 2}
\left (
x_j -\frac \partial {\partial x_j}
\right ),
$$
respectively, $j=1,\dots ,d$. It is then well-known that if $e_j$ is the $j$th
vector in the standard basis in $\rr d$, then
\begin{align}
\operatorname{A}_jh_\alpha
&=
\begin{cases}
\sqrt {\alpha _j}\, h_{\alpha -e_j}, & \alpha _j\ge 1,
\\[1ex]
0, & \alpha _j=0
\end{cases}
\label{Eq:HermiteAnn}
\intertext{and}
\operatorname{C}_jh_\alpha
&=
\sqrt {\alpha _j+1}\, h_{\alpha +e_j},
\quad \alpha \in \nn d.
\label{Eq:HermiteCreation}
\end{align}
(Cf. e.{\,}g. \cite{Ba1}.) This implies
\begin{multline}\label{Eq:SalphaTalphaHermite1}
A^\alpha h_\alpha = \alpha !^{\frac 12}\, h_0,
\quad \text{and}\quad
C^\alpha h_0 = \alpha !^{\frac 12}\, h_\alpha ,
\\[1ex]
\text{where}\quad
A^\alpha = \prod _{j=1}^d \operatorname{A}_j^{\alpha _j},
\quad \text{and}\quad 
C^\alpha = \prod _{j=1}^d \operatorname{C}_j^{\alpha _j}.
\end{multline}
Furthermore,
\begin{equation}\label{Eq:SalphaTalphaHermite2}
A^\beta h_\alpha =0
\quad \text{when}\quad
\alpha _j < \beta _j\quad 
\text{for some}\quad j\in \{ 1,\dots ,d\}.
\end{equation}

\par

\begin{proof}[Proof of Theorem \ref{Thm:InvHilbertSpaces}]
Suppose that $\alpha ,\beta \in \nn d$ are such that $\beta _j>\alpha _j$ for some
$j\in \{ 1,\dots ,d \}$. Then Lemma \ref{Lemma:GenPropInvHilbertSpaces} (2),
\eqref{Eq:SalphaTalphaHermite1} and \eqref{Eq:SalphaTalphaHermite2}
imply
$$
(h_\alpha ,h_\beta )_{\maclH} =
\beta !^{-\frac 12} (h_\alpha  ,C^\beta h_0)_{\maclH} =
\beta !^{-\frac 12} (A^\beta h_\alpha  ,h_0 )_{\maclH} =0
$$
and
\begin{align*}
\nm {h_\alpha}{\maclH}^2
&= (h_\alpha ,h_\alpha )_{\maclH} =
\alpha !^{-\frac 12} (h_\alpha  ,C^\alpha h_0)_{\maclH}
\\[1ex]
&=
\alpha !^{-\frac 12} (A^\alpha h_\alpha  ,h_0)_{\maclH}
=
(h_0 ,h_0)_{\maclH} = \nm {h_0}{\maclH}^2.
\end{align*}
This implies that
$
\{ \nm {h_0}{\maclH}^{-1}h_\alpha \} _{\alpha \in \nn d}
$
is an orthonormal system for $\maclH$.

\par

Hence, if $f\in \mascS (\rr d)$, then
\begin{multline*}
\nm f{\maclH}^2 = \Nm {\sum _{\alpha \in \nn d}(f,h_\alpha )_{L^2}h_\alpha }{\maclH}^2
\\[1ex]
=
\nm {h_0}{\maclH}^2\sum _{\alpha \in \nn d}|(f,h_\alpha )_{L^2}|^2
=
\nm {h_0}{\maclH}^2 \nm f{L^2}^2.
\end{multline*}
Since $\mascS (\rr d)$ is dense in $L^2(\rr d)$, it follows that $L^2(\rr d)$ is continuously
embedded in $\maclH$, and that \eqref{Eq:NormEquiv} holds. Furthermore, let from now on
the original $\maclH$ norm be replaced by
$f\mapsto \nm {h_0}{\maclH}^{-1}\nm f{\maclH}$. Then it follows that the inclusion
$i\, :\, L^2(\rr d)\to \maclH$ is an isometric injection.

\par

We shall use Hahn-Banach's theorem to prove that the latter map is in fact bijective.
Suppose that $\ell$ is a linear continuous form on $\maclH$ which is zero on $L^2(\rr d)$.
Then $\ell (f)=(f,g_0)_{\maclH}$ for some unique $g_0\in \maclH$. We need to prove that
$g_0=0$. Recall that the restriction of $(\cdo ,\cdo )_{L^2}$ on
$\mascS (\rr d)\times \mascS (\rr d)$ is uniquely
extendable to a continuous sesqui-linear form on $\mascS (\rr d)\times \mascS '(\rr d)$
and that the dual of $\mascS (\rr d)$ can be identified by $\mascS '(\rr d)$ through
this extension.
Since the forms $(\cdo ,\cdo )_{L^2}$ and $(\cdo ,\cdo )_{\maclH}$ agree on $L^2(\rr d)$
which contains $\mascS (\rr d)$ and that $\maclH \subseteq \mascS '(\rr d)$,
the same extension and duality properties hold true with $(\cdo ,\cdo )_{\maclH}$
in place of $(\cdo ,\cdo )_{L^2}$. In particular,
$$
(f,g)_{L^2} = (f,g)_{\maclH},\qquad f\in \mascS (\rr d),\ g\in \mascS '(\rr d).
$$

\par

We have $g_0\in \maclH \subseteq \mascS '(\rr d)$, and since any element
in $\mascS '(\rr d)$ has a Hermite series expansion, converging in
$\mascS '(\rr d)$, whose coefficients are polynomially bounded with respect
to their orders, it follows that
$$
g_0 = \sum _{\alpha \in \nn d} c(\alpha )h_\alpha ,
$$
for some $\{ c(\alpha )\} _{\alpha \in \nn d}$ such that $|c(\alpha )|\lesssim (1+|\alpha |)^N$
for some $N\ge 0$. Since $(f,g_0)_{\maclH}=0$ when $f\in L^2(\rr d)$, and that
$h_\alpha \in \mascS (\rr d)$ we get
$$
c(\alpha ) = \overline {(h_\alpha ,g_0)_{L^2}} = \overline {(h_\alpha ,g_0)_{\maclH}}=0,
$$
giving that $g_0=0$.

\par

By Hahn-Banach's theorem it follows that $L^2(\rr d)$ is dense in $\maclH$. Since
$L^2(\rr d)$ is also a closed subset of $\maclH$, it follows that $\maclH = L^2(\rr d)$,
and the result follows.
\end{proof}

\par

\end{document}